\newlength{\bibitemsep}\setlength{\bibitemsep}{.2\baselineskip plus .05\baselineskip minus .05\baselineskip}
\newlength{\bibparskip}\setlength{\bibparskip}{0pt}
\let\oldthebibliography\thebibliography
\renewcommand\thebibliography[1]{%
  \oldthebibliography{#1}%
  \setlength{\parskip}{\bibitemsep}%
  \setlength{\itemsep}{\bibparskip}%
}
\newtheorem{thm}{Theorem}
\newtheorem{lem}[thm]{Lemma}
\newtheorem{prop}[thm]{Proposition}
\newtheorem{defn}{Definition}
\newtheorem{rem}[thm]{Remark}
\title{\LARGE \bf
Distributed Linear Quadratic Optimal Control:\\
Compute Locally and Act Globally
}
\author{Junjie~Jiao,~Harry~L.~Trentelman,~\IEEEmembership{Fellow,~IEEE,} and M.~Kanat~Camlibel,~\IEEEmembership{Member,~IEEE} 
\thanks{The authors are with the Bernoulli Institute for Mathematics, Computer
	Science and Artificial Intelligence, University of Groningen, The Netherlands 
      (Email:  { j.jiao@rug.nl; h.l.trentelman@rug.nl; 
      	m.k.camlibel@rug.nl)}}%
}
\begin{document}

\maketitle
\thispagestyle{empty}
\pagestyle{empty}

\begin{abstract}

In this paper we consider the distributed linear quadratic control problem for 
 networks of agents with single integrator dynamics.
We first establish a general formulation of the distributed LQ problem and show that the optimal control gain depends on global information on the network. Thus, the optimal protocol can only be computed in a centralized fashion.
In order to overcome this drawback, we propose the design of protocols that are computed in a decentralized way.  We will write the global cost functional  as a sum of {\em local} cost functionals, each associated with one of the agents. In order to achieve `good' performance of the controlled network, each agent then computes its own local gain, using sampled information of its neighboring agents. This decentralized computation will only lead to suboptimal global network behavior. However, we will show that the resulting network will reach consensus.
A simulation example is provided to illustrate the performance of the proposed protocol.
\end{abstract}

\section{Introduction}\label{sec_problem_formulation}
%
The distributed linear quadratic (LQ) optimal control problem is the problem of interconnecting a finite number of identical agents according to a given network graph so that consensus is achieved in an optimal way. Each agent receives input only from its neighbors, in the form of a linear feedback of the relative states amplified by a certain constant gain. Such control law is called a {\em distributed diffusive control law}. The problem of minimizing a given quadratic cost functional over all distributed diffusive control laws that achieve consensus is then called the distributed LQ problem corresponding to this cost functional. 

In the case that the agent dynamics is given by a general state space system, this optimal control problem is non-convex and difficult to solve, and it is unclear whether a solution exists in general, see \cite{Jiao2018}. In contrast, for the case of single integrator dynamics it is fairly easy to find an explicit expression for the optimal distributed diffusive control law, see, for example, \cite{wei_ren2010}. 
Although a solution to the problem is available, it turns out however that {\em global information} on the network is needed to compute this optimal control law.
More specifically, the optimal distributed diffusive control law can be computed only by a (virtual) supervisor that knows the network graph and the initial states of all the agents. Thus, although the resulting optimal control law {\em operates} in a distributed fashion, its actual computation can only be performed in a {\em centralized} way. 

Formulating the distributed LQ problem as a problem of  minimizing a {\em global} cost functional is therefore not practical. Indeed, the centralized computation requires that the local optimal gains needs to be re-designed by the supervisor in case that changes in the network occur. For example, by adding or removing agents from the network, its graph will change, and new initial states will occur while existing  ones will disappear.

In the present paper we will address this drawback and present a {\em decentralized} design method to compute a distributed controller: each agent will compute its own local control law. For this computation, the agent will not need knowledge of the network graph or the initial states of all other agents. This will then enable `plug-and-play' operations on the network, since each agent will be able to automatically recompute its local gain whenever a new agent is added or removed.

In order to achieve this decentralized computation scheme we will write the original global cost functional as the sum of {\em local} LQ tracking cost functionals, each associated with one of the agents. The agents can not solve these optimal tracking problems explicitly because the reference signals depend on the future dynamics of the neigbours. However, using sampling, suboptimal local gains are obtained. This decentralized computation will not necessarily result in optimality of the global network behavior. We will however show that the resulting network will reach consensus.

The distributed LQ  control problem has attracted much attention in the past, see e.g. \cite{tamas2008,wei_ren2010,lunze_ecc_2013,Mosebach2014}. 
%
In \cite{tamas2008}, a suboptimal distributed controller for a global cost functional was developed to stabilize a network with general agent dynamics. 
A similar cost functional was also considered in \cite{GuaranteedLQR} for designing distributed controllers with guaranteed performance.
The distributed LQ control problem with general agent dynamics was also dealt with in  \cite{kristian2014} and  \cite{Inverse2015} by adopting an inverse optimal control approach. In \cite{SEMSARKAZEROONI20092205} a game theoretic approach was considered to obtain a suboptimal solution. Also, \cite{Jiao2018} considers a suboptimal version of this problem.
In \cite{Nguyen2015}, a suboptimal consensus controller design was developed by employing a hierarchical LQ control approach for an appropriately chosen global performance index, and a similar idea for constructing a particular cost functional was employed in \cite{nguyen_2017} to design a reduced order distributed controller.  
In \cite{Decoupling2017} a distributed optimal control method was adopted to decouple a class of linear multi-agent systems with state coupled nonlinear uncertainties.

The common feature of all work referred to above is that the computation of the control gains needs global information on the network. 
This disadvantage can be avoided by adopting adaptive control methods \cite{Ding2015} or by using reinforcement learning \cite{VAMVOUDAKIS20121598}, \cite{MODARES2016334}.
In  \cite{frank_structure_2015} and \cite{7862732}, it was shown that diffusive couplings are necessary for minimization of cost functionals of a particular form, involving the weighted squared synchronization error.

Below we list the contributions of the present paper.
\begin{enumerate}
\item
 We show that for agents with single integrator dynamics, in any distributed LQ cost functional the state weighting matrix must be equal to a weighted square of the Laplacian of the network graph.
 \item
 We give a solution to this general distributed LQ problem, and show that computation of the optimal protocol requires exact knowledge of the Laplacian and the initial state of the entire network.
 \item 
We represent the global cost functional as a sum of local LQ tracking cost functionals, one for each agent. Using sampling, suboptimal local gains are obtained.  Computation of these gains is completely decentralized. 
\item 
We show that these gains lead to a protocol that achieves consensus of the network.
\end{enumerate}

The outline of this paper is as follows. 
In Section \ref{sec_dis_LQ}, we derive a general formulation of the distributed LQ problem. 
In Section \ref{sec_centralized_gain_compu}, we show that computation of the optimal control laws requires complete knowledge of the network graph and the initial state of the entire network. 
In Section \ref{decentralized_gain_compu}, we propose a decentralized method to compute suboptimal (local) control laws. In order to do this, we need to apply ideas from linear quadratic tracking, and these are reviewed in Section \ref{sec_lqt_problem}.
Then, in Section \ref{sec_main_results}, we compute these local control laws, and show that the network reaches consensus if all agents apply their own local gain.
To illustrate the designed control protocol, a simulation example is provided in Section \ref{sec_simulation}.
Finally, in Section \ref{sec_conclusion}, we will give some concluding remarks.

\subsection*{Notation}
We denote by $\mathbb{R}$ the field of real numbers. The space of $n$-dimensional real vectors is denoted by $\mathbb{R}^n$.
The vector in $\mathbb{R}^N$ with all components equal to 1 is denoted by $\mathbf{1}_N$.
%
%
%
The identity matrix of dimension $n \times n$ is denoted by $I_n$.
%
%
For a symmetric matrix $P$, we write $P>0$ ($P \geq 0$) if $P$ is positive (semi-)definite.
We use $\text{diag}(a_1, a_2, \ldots, a_n)$ to denote the $n \times n$ diagonal matrix with $a_1, a_2, \ldots, a_n$ on its diagonal.
For a linear map $A: \mathcal{X} \to \mathcal{Y}$, the kernel and image of $A$ are denoted by $\ker (A) : = \{ x \in \mathcal{X} \mid A x =0 \}$ and ${\rm im}(A) := \{ Ax \mid x \in \mathcal{X} \}$, respectively.

	In this paper, a graph is denoted by $\mathcal{G} = (\mathcal{V},\mathcal{E})$ with $\mathcal{V} = \{ 1,  2, \ldots, N \}$ the node set and $\mathcal{E} \subset \mathcal{V}\times \mathcal{V}$ the edge set. 
	For $i,j \in \mathcal{V}$, an edge from node $i$ to $j$ is represented by $(i,j) \in \mathcal{E}$.
	The neighboring set of node $i$ is defined as $\mathcal{N}_i := \{ j \in \mathcal{V} \mid (i, j) \in \mathcal{E} \}$.
	The adjacency matrix  of $\mathcal{G}$ is equal to $A = [a_{ij}] \in \mathbb{R}^{N \times N}$, where $a_{ij} = 1$ if $(j,i) \in \mathcal{E}$ and $a_{ij} =0$ otherwise. 
	The degree matrix of $\mathcal{G}$ is given by $D = \text{diag}(d_1,d_2,\ldots,d_N)$ with $d_i = \sum_{j=1}^N a_{ij}$, and the Laplacian matrix is defined as $L := D -A$.
	A graph is called simple if $\mathcal{E}$ only contains edges $(i,j)$ with $i \neq j$, and it is 
	called undirected if $(i,j) \in \mathcal{E}$ implies that $(j,i) \in \mathcal{E}$. Obviously, a graph is undirected if and only if $L$ is symmetric.
	A simple undirected graph is called connected if for each pair of nodes $i$ and $j$ there exists a path from $i$ to $j$.
	Throughout this paper, it will be a standing assumption that the network graph is a connected simple undirected graph. 
	%
		%
	


\section{The general form of a distributed LQ cost functional}\label{sec_dis_LQ}
In this section we will show that in any distributed LQ cost functional, the state weighting matrix must be a weighted square of the Laplacian of the network graph. We will also give two important examples of distributed LQ cost functionals. 

We consider a network of agents described by scalar single integrator dynamics
\begin{equation}\label{one_agent}
	\dot{x}_i(t) = u_i(t),\quad x_i(0) = x_{i0},\quad i =1,2,\ldots, N,
\end{equation}
with $x_{i0} \in \mathbb{R}$ the initial state of agent $i$. By collecting the states and inputs of the individual agents into the vectors $x = (x_1, x_2, \ldots, x_N)^\top$ and $u = (u_1, u_2, \ldots, u_N)^\top$, \eqref{one_agent} can be written as
\begin{equation} \label{all_agents}
	\dot{x}(t) = u(t),\quad x(0) = x_0.
\end{equation}
A general class of LQ cost functionals are those of the form
\begin{equation} \label{general_LQ}
J(x_0,u) = \int_0^{\infty} x^\top(t) Qx(t) + u^\top(t) Ru(t) dt,
\end{equation}
where $Q \in \mathbb{R}^{N\times N}$, $R \in \mathbb{R}^{N\times N}$ and $Q \geq 0$ and $R >0$.

In the context of distributed LQ control we only allow distributed diffusive control laws that achieve consensus, i.e. the controlled trajectories converge to ${\rm im}(\mathbf{1}_N)$, the span of the vector of ones.  Thus the class of control laws over which we want to minimize \eqref{general_LQ} consists of those of  the form $u = -g Lx$, with  $L \in \mathbb{R}^{N \times N}$  the Laplacian of the network graph and where $g>0$, see e.g.  \cite{Olfati-Saber2004}. 

We will now show that for a cost functional \eqref{general_LQ} to make sense in this context, the weighting matrix $Q$ must be of the form $Q = LWL$ for some positive semi-definite matrix $W$.
\begin{lem} \label{Specialform}
$J(x_0,u) < \infty$ for all $x_0 \in \mathbb{R}^N$ and control laws of the form $u = -g Lx$  with $g >0$ only if there exists a positive semi-definite $ W \in \mathbb{R}^{N \times N}$ such that $Q = LWL$.
\end{lem}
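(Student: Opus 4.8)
The plan is to exploit the explicit closed-loop dynamics generated by a diffusive law and to read off the constraint that finiteness of the cost imposes on $Q$. Substituting $u = -gLx$ into \eqref{all_agents} yields the closed-loop system $\dot{x}(t) = -gLx(t)$, whose solution is $x(t) = e^{-gLt}x_0$. Since the graph is connected, simple and undirected, $L$ is symmetric positive semi-definite with a simple zero eigenvalue and $\ker(L) = {\rm im}(\mathbf{1}_N)$. Using the spectral decomposition of $L$ I would conclude that $e^{-gLt} \to \tfrac{1}{N}\mathbf{1}_N\mathbf{1}_N^\top$ as $t \to \infty$, so that $x(t) \to c\,\mathbf{1}_N$ with $c = \tfrac{1}{N}\mathbf{1}_N^\top x_0$ the average of the initial states. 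I would also note that $Lx(t) = e^{-gLt}Lx_0$ decays exponentially, because $Lx_0 \in {\rm im}(L) = {\rm im}(\mathbf{1}_N)^\perp$, on which $L$ is positive definite. Hence the input term $g^2\int_0^{\infty} (Lx)^\top R (Lx)\,dt$ is always finite, and the only obstruction to finiteness of $J$ comes from the state term.

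Next I would extract the necessary condition on $Q$. The integrand $x^\top(t)Qx(t)$ is continuous, nonnegative (as $Q \geq 0$), and converges to $c^2\,\mathbf{1}_N^\top Q\mathbf{1}_N$. A nonnegative continuous function with a strictly positive limit has a divergent integral over $[0,\infty)$, so $\int_0^{\infty} x^\top Qx\,dt$ can be finite only if this limit vanishes. Choosing $x_0 = \mathbf{1}_N$ (so that $c = 1$) forces $\mathbf{1}_N^\top Q\mathbf{1}_N = 0$, and since $Q$ is positive semi-definite this is equivalent to $Q\mathbf{1}_N = 0$; by symmetry $\mathbf{1}_N^\top Q = 0$ as well. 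Thus finiteness of $J$ for all $x_0$ forces $\ker(Q) \supseteq {\rm im}(\mathbf{1}_N) = \ker(L)$.

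Finally I would construct $W$ explicitly. Let $L^\dagger$ denote the Moore--Penrose pseudo-inverse of $L$; it is symmetric and satisfies $LL^\dagger = L^\dagger L = P$, the orthogonal projector onto ${\rm im}(\mathbf{1}_N)^\perp$. Setting $W := L^\dagger Q L^\dagger$, positive semi-definiteness is immediate: factoring $Q = M^\top M$ gives $W = (ML^\dagger)^\top(ML^\dagger) \geq 0$. To verify $LWL = Q$, I would compute $LWL = PQP$ and use $Q\mathbf{1}_N = 0$ together with $\mathbf{1}_N^\top Q = 0$ to obtain $QP = PQ = Q$, hence $PQP = Q$. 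This produces a matrix $W \geq 0$ with $Q = LWL$, as required.

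I expect the only genuinely delicate step to be the passage from finiteness to $Q\mathbf{1}_N = 0$: one must justify that the integrand really converges to $c^2\,\mathbf{1}_N^\top Q\mathbf{1}_N$, that a positive limit of a nonnegative continuous integrand precludes convergence of the improper integral, and then isolate $\mathbf{1}_N^\top Q\mathbf{1}_N = 0$ through a suitable choice of $x_0$. The remaining algebra linking $Q\mathbf{1}_N = 0$ to the factorization $Q = LWL$ is routine once the pseudo-inverse and the kernel inclusion $\ker(L) \subseteq \ker(Q)$ are in hand.
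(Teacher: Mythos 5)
Your proof is correct, and its core is the same as the paper's: exploit the fact that closed-loop trajectories under $u=-gLx$ converge to a consensus point $c\,\mathbf{1}_N$, so finiteness of the cost forces the state weight to annihilate $\mathbf{1}_N$, i.e. $\ker(L)\subseteq\ker(Q)$. The two proofs finish differently, though. The paper first factors $Q=C^\top C$, argues that finite cost forces $C\bar{x}(t)\to 0$ along any trajectory, hence $\mathbf{1}_N\in\ker(C)$, and then invokes the linear-algebra fact that $\ker(L)\subseteq\ker(C)$ implies $C=VL$ for some $V$, giving $W=V^\top V$. You instead work with $Q$ directly: the choice $x_0=\mathbf{1}_N$ (for which the trajectory is stationary, making the divergence argument immediate) yields $\mathbf{1}_N^\top Q\mathbf{1}_N=0$, hence $Q\mathbf{1}_N=0$ by semi-definiteness, and you then construct $W=L^\dagger QL^\dagger$ explicitly, verifying $LWL=PQP=Q$ via projector algebra. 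Your finishing step is more constructive and avoids the existence argument for $V$; your treatment of the analytic step is also tighter than the paper's, which asserts that a finite integral implies $C\bar{x}(t)\to 0$ (true here only because the limit of $C\bar{x}(t)$ exists, a point the paper leaves implicit, and which your ``positive limit implies divergent integral'' argument handles cleanly). You also verify, which the paper does not bother to, that the input term of the cost is always finite, so that the state term is the only possible source of divergence.
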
 
\begin{proof}
Write $Q = C^TC$ for some $C$. Now, let $\bar{x}(t)$ denote any nonzero state trajectory generated by the control law $u = - g Lx$ with $g >0$ and let $\bar{u}(t) = - g L\bar{x}(t)$. It is well known that this control law achieves consensus (see \cite{Olfati-Saber2004}) so we have $\bar{x}(t) \rightarrow c \mathbf{1}_N$ for some nonzero constant $c$. Now assume that the control law $u = - gLx$ gives finite cost, i.e. $J(x_0,\bar{u}) < \infty$.  This implies  $\int_0^{\infty} \bar{x}^\top(t)C^\top C\bar{x}(t) dt < \infty$ and hence $C\bar{x}(t) \rightarrow 0$. Thus we obtain $\mathbf{1}_N \in \ker(C)$, equivalently, $\ker(L) \subset \ker (C)$. 
We thus conclude that there exists a matrix $V$ such that $C = V L$ so the state weighting matrix $Q$ must be of the form $Q = L V^\top V L$ for some matrix $V$. This proves our claim.
\end{proof}
We have thus shown that, for a general LQ cost functional to make sense in the context of distributed diffusive control for multi-agent systems, it must necessarily be of the form 
\begin{equation} \label{general_LQ_new}
J(u,x_0) = \int_0^{\infty} x^\top(t) LWL x(t) + u^\top(t) Ru(t) dt,
\end{equation}
for some $W \geq 0$ and $R >0$.
The corresponding distributed LQ problem is to minimize, for the system \eqref{all_agents} with initial state $x_0$, the cost functional \eqref{general_LQ_new} over all control laws of the form $u = - g Lx$ with $g > 0$.

As an illustration, we will now provide two important special cases of LQ cost functionals.
The first one was studied before in \cite{Jiao2018} and \cite{wei_ren2010}: 
\begin{equation}\label{cost_i}
J(u, x_0) \!=\sum_{i=1}^{N} \int_{0}^{\infty}\!\!   \sum_{j\in \mathcal{N}_i}q (x_i(t)-x_j(t))^2 + r u_i^2(t) dt,
\end{equation}
where $q$ and $r$ are positive real numbers. Clearly, \eqref{cost_i} is equal to 
$
J(x_0,u) = \int_0^{\infty} x^\top(t) 2qL x(t) + r u^\top(t) u(t) dt.
$
Note that $2qL = L (2qL^{\dagger}) L$ with $L^{\dagger}$ the Moore-Penrose inverse of $L$ (which is indeed positive semi-definite). Thus this cost functional is of the form \eqref{general_LQ_new} with 
$W =2 qL^{\dagger}$ and $R = r I$. 

As a second example, consider
\begin{equation}\label{cost_r}
	J(x_0,u)=  \sum_{i=1}^{N}\int_{0}^{\infty}   q   \left( x_i(t) -  a_i(t)\right)^2 + r u_i^2(t) dt,
\end{equation}
with
\begin{equation}\label{r_i_t} 
	a_i(t) := \frac{1}{d_i +1} \big(x_i(t)  + \sum_{j \in \mathcal{N}_i}x_j(t) \big). 
\end{equation}
Here, $q$ and $r$ are positive weights, $d_i$ denotes the node degree of agent $i$ and $\mathcal{N}_i$ its set of neighbors.
The idea of the cost functional \eqref{cost_r} is to minimize the sum of the deviations between the state $x_i(t)$ and the average $a_i(t)$ of the states of its neighbors (including itself) and the control energy. In order to put this in the form \eqref{general_LQ_new}, define
\begin{equation}\label{matrix_G}
	G := (D + I_N)^{-1} (A + I_N) \in \mathbb{R}^{N\times N},
\end{equation}
where $D \in \mathbb{R}^{N\times N}$  is the degree matrix and $A \in \mathbb{R}^{N\times N}$  the adjacency matrix. Then clearly $a(t) = Gx(t)$, where $x =(x_1,x_2, \ldots, x_N)^\top$ and $a = (a_1,a_2, \ldots, a_N)^\top$. It is then easily seen that
\[
J(x_0,u)= \int_{0}^{\infty}\!\!\!\!q x\!^\top\!(t) (I_N -G)\!^\top\! (I_N -G) x(t) + ru\!^\top\!(t)u(t) dt.
\]
Since $(I_N -G)^\top(I_N -G) =L (D + I_N)^{-2}L$, we conclude that \eqref{cost_r} is a special case of 
\eqref{general_LQ_new} with  $W = q(D + I_N)^{-2}$ and  $R = r I_N$.

\section{Centralized Optimal Gain} \label{sec_centralized_gain_compu}
In this section we will briefly give a solution to the general distributed LQ problem with cost functional \eqref{general_LQ_new} as introduced  in Section 
\ref{sec_dis_LQ}, thus generalizing the result from \cite{wei_ren2010} to general distributed LQ cost functionals. We will show that, indeed, computation of the optimal protocol requires global information on the network graph and the initial state of the entire network.

Consider the cost functional \eqref{general_LQ_new} together with the dynamics \eqref{all_agents} with given initial state $x_0$. 
Since the admissible control laws are given by $u = -gLx$, the associated state trajectory is $x(t) = e^{-gL t} x_0$ and $u(t) = -gLx(t)$. Substituting this into the cost functional yields
\begin{equation} \label{function_k}
	J(g):= x_0^\top(\int_0^{\infty}\!\!\!\!\!\! e^{-g Lt} \left( LWL \!+\! g^2 LRL \right) e^{-gLt} dt)x_0
\end{equation}
Clearly, we need to minimize $J(g)$ over $g >0$.  
Substituting $g t = \tau$, we find 
\[
J(g):= x_0^\top \int_0^{\infty} e^{-\tau L} \left( \frac{1}{g} LWL + g LRL \right) e^{-\tau L} d\tau ~x_0.
\]
Define $X_0 := \int_0^{\infty} e^{-\tau L}LWL e^{-\tau L} d\tau$ and $Y_0 := \int_0^{\infty} e^{-\tau L}LRL e^{-\tau L} d\tau$. 
It turns out that both integrals indeed exist, and can be computed as particular solutions of the Lyapunov equations
\begin{subequations}
\begin{gather} 
-LX -XL + LWL =0\label{Lyapunov_1}\\
-LY -YL+ LRL =0\label{Lyapunov_2}
\end{gather}
\end{subequations}
Indeed, although $L$ is not Hurwitz, these equations do have positive semi-definite solutions $X$ and $Y$ and, in fact, $X_0$ is the unique positive semi-definite solution $X$ to \eqref{Lyapunov_1} with the property that ${\rm im}(\mathbf{1}_N) \subset \ker(X)$. Likewise $Y_0$ is the unique positive semi-definite solution $Y$ of \eqref{Lyapunov_2} with the property that ${\rm im}(\mathbf{1}_N)\subset \ker(Y)$ (see Proposition 1 in \cite{hiddejan}). It follows from \eqref{Lyapunov_2} that, in fact, $\ker(Y_0) = {\rm im}(\mathbf{1}_N)$.
Thus we see that $J(g) = \frac{1}{g} x^\top_0 X_0 x_0 + g x_0^\top Y_0 x_0$. 

In order to minimize $J(g)$ we distinguish three cases. (i) If $x_0 \in \ker(Y_0) = {\rm im}(\mathbf{1}_N)$ then we must have $x_0 \in \ker(X_0)$ as well, so $J(g) =0$ for all $\theta$ and every $g>0$ is optimal. (ii) If $x_0^\top Y_0 x_0 > 0$ and $x_0^\top X_0 x_0 =0$ then no optimal $g>0$ exists. (iii) If $x_0^\top Y_0 x_0 > 0$ and $x_0^\top X_0 x_0 > 0$ then an optimal $g >0$ exists and can be shown to be equal to
$
g^{\ast} = \left( \frac{x_0^\top X_0 x_0}{x_0^\top Y_0 x_0} \right) ^{\frac{1}{2}}.
$
It is clear that the computation of the optimal gain $g$ requires exact knowledge of the network graph in the form of the Laplacian $L$. Also, the optimal gain clearly depends on the global initial state of the network. 

\section{Towards Decentralized Computation}\label{decentralized_gain_compu}
In this section we will propose a new approach to compute `good' local gains that can be computed in a decentralized way. Instead of doing this for the general LQ cost functional \eqref{general_LQ_new}, we will zoom in on the particular case given by \eqref{cost_r}-\eqref{r_i_t}. 


%
In order to decentralize the computation, instead of minimizing the global cost functional \eqref{cost_r} for the multi-agent system \eqref{all_agents}, we write it as a sum of {\em local} cost functionals, one for each agent in the network. 

More specifically, the associated local cost functional for agent $i$ is given by
\begin{equation}\label{local_cost_r}
	J_{i} (u_i)= \int_{0}^{\infty} q \left( x_i(t) -  a_i(t )\right)^2 + r u_i^2(t) \  dt,
\end{equation}
where $a_i(t)$ is defined in \eqref{r_i_t}, for $ i =1,2,\ldots,N$.
This local cost functional penalizes the squared difference between the state of the $i$th agent and the average of the states of its neighboring agents (including itself), and the local control energy.
By minimizing \eqref{local_cost_r}, agent $i$ would make the difference between its own state and the average of the states of its neighbors (including itself) small. 
Note, however, that it is impossible for agent $i$ to minimize this local cost functional since the trajectory $a_i(t)$ for $t \in [0, \infty)$ associated with the neighboring agents is {\em not known}, so  also not available to the $i$-th agent.  Thus, 
because direct minimization of \eqref{local_cost_r} is impossible, as an alternative we will replace each of these local optimal control problems by {\em a sequence of linear quadratic tracking problems} that do turn out to be tractable.

More specifically, we choose a sampling period $T>0$, and introduce the following sampling procedure.
%
%
For each nonnegative integer $k$, at time $t= kT$ the $i$-th agent receives the sampled state value $x_j(kT)$ of its neighboring agents and takes the average of these, which is given by
\begin{equation}\label{r_tk}	
a_i(kT) = \frac{1}{d_i +1} \big(x_i(kT)  + \sum_{j \in \mathcal{N}_i}x_j(kT) \big).
\end{equation}
Then,  the $i$-th agent minimizes the cost functional
\begin{equation}\label{discounted_cost_r}
	J_{i,k} (u)\!=\!\! \int_{0}^{\infty}\!\!\!\!\!\! e^{-2\alpha t}\! \left( q \left( x_i(t) \!-\!  a_i(kT)\right)^2\! +\! r u_i^2(t) \right)\! dt.
\end{equation}
In fact, this is a discounted linear quadratic tracking problem with constant reference signal $a_i(kT)$ and discount factor $\alpha >0$. By solving this linear quadratic tracking problem, agent $i$ obtains an optimal control law over an infinite time interval.
However, agent $i$ applies this control law only on the time interval  $[kT, (k+1)T)$. 

Then, at time $t = (k+1)T$ the above procedure is repeated, i.e. agent $i$ receives the updated average $a_i((k+1)T)$, and subsequently solves the discounted tracking problem with cost functional $J_{i,k+1} (u)$ which involves the constant updated reference signal $a_i((k+1)T)$.
By performing this control design procedure sequentially at each sampling time $kT$, we then obtain a single control law for agent $i$ over the entire interval $[0, \infty)$.

	Based on this control design procedure for the individual agents, we will obtain a distributed control protocol for the entire multi-agent system, simply by letting all agents compute their own control law. 
	In the sequel we will analyze this protocol and show that it achieves consensus for the network:
\begin{defn}\label{def_consensus}
	A distributed control protocol  is said to achieve consensus for the network if $x_i(t) - x_j(t) \to 0$ as $t \to \infty$ for all initial states of agents $i$ and $j$, for all $i, j =1,2,\ldots,N$.
\end{defn}
%
%
	
	In order to obtain an explicit expression for the control protocol proposed above, we will study the linear quadratic tracking problem for a single linear system. This will be done in the next section.

%


\section{The Discounted LQ Tracking Problem}\label{sec_lqt_problem}
In this section, we will deal with the discounted linear quadratic tracking problem for a given linear system. The linear quadratic tracking problem has been studied before, see e.g. \cite{6787009}. 
Here, however, we will solve it by transforming it into a standard linear quadratic control problem.

Consider the continuous-time linear time-invariant system
\begin{equation}\label{sys_x}
	\dot{x}(t) = A x(t) + Bu(t),\quad x(0) = x_0,
\end{equation}
with $A \in \mathbb{R}^{n \times n}$ and $B \in \mathbb{R}^{n \times m}$, and where $x(t) \in \mathbb{R}^n$, $u (t) \in \mathbb{R}^m$ denote the state and the input, respectively.
We assume that the pair $(A, B)$ is stabilizable.
Given is also a contant reference signal $r_{\rm ref}(t) = r$ with $ r \in \mathbb{R}^n$.
Next,  we introduce a discounted quadratic cost functional given by
\begin{equation}\label{cost_x}
\!\!\!\!J(u) \!= \!\!\!\int_{0}^{\infty}\!\!\!\!\!\!\!  e^{-2\alpha t} [ {\left( x(t) \!  -\!  r\right)\!}^\top \!\!Q \left( x(t)\!   - \! r\right) 
 \!+\! {u\!}^\top\!(t) R u(t)  ] dt
\end{equation}
where $Q \in \mathbb{R}^{n\times n}$, $R \in \mathbb{R}^{m\times m}$ and $Q > 0$ and $R>0$ are given weight matrices and $\alpha >0$ is a discount factor \cite{6787009}.
%
%
The linear quadratic tracking problem is  to determine for every initial state $x_0$ a piecewise continuous input function $u(t)$ that minimizes the cost functional \eqref{cost_x}.

To solve this problem, we introduce the  variables 
\begin{equation}\label{new_variables}
	z(t) = e^{-\alpha t}x (t), \ z_r(t) = e^{-\alpha t}  r,\ v(t) = e^{-\alpha t} u(t),
\end{equation}
and denote ${\xi}(t) = (z^\top(t), z_r^\top(t))^\top$.
Then we obtain an auxiliary system in terms of $\xi$ and $v$, given by
\begin{equation*}\label{sys_z}
\dot{\xi}(t)
= 
A_e
\xi(t)
+ 
B_e v(t), \quad \xi_0= (x^\top_0, r^\top)^\top,
\end{equation*}
where $\xi_0 \in \mathbb{R}^{2n}$ is the initial state and
\begin{equation*}\label{barA_barB} 
A_e = 
\begin{pmatrix}
A-\alpha I_n & 0 \\
0 & -\alpha I_n
\end{pmatrix},\quad
B_e = 
\begin{pmatrix}
B \\
0
\end{pmatrix}.
\end{equation*}
In terms of the new variables $\xi$ and $v$, the cost functional  \eqref{cost_x} can be written as 
$	J(v) = \int_{0}^{\infty} 
	\xi^\top(t)
	Q_e
	\xi(t)
	+
	v ^\top (t)R v(t) \ dt ,$
where
$Q_e =
\left(\begin{matrix}
Q & -Q \\
-Q & Q
\end{matrix}\right) \in \mathbb{R}^{2n \times 2n}.$ 
%
The problem is now to find, for every initial state $\xi_0$, a piecewise continuous input function $v(t)$ that minimizes this cost functional.
%
This is a so-called a {\em free endpoint} standard LQ control problem, see \cite[pp. 218]{harry_book}.  
Since the pair $(A, B)$ is stabilizable, the pair $(A_e, B_e)$ is also stabilizable and hence the input function $v(t)$ that minimizes the cost functional $J(v)$ is generated by the feedback law
\begin{equation}\label{control_v}
	v(t) = -R^{-1} B_e^\top P_e^- \xi(t),
\end{equation}
where  $P_e^- \in \mathbb{R}^{2n \times 2n}$ is the smallest positive semi-definite solution of the Riccati equation
\begin{equation}\label{are}
A_e^\top P_e^- + P_e^- A_e -P_e^- B_e R^{-1}B_e^\top P_e^- + Q_e =0.
\end{equation}
Now,  partition
$P_e^- := 
\begin{pmatrix}
P_1 & P_{12} \\
P_{12}^\top & P_2
\end{pmatrix},$
where all blocks have dimension $n \times n$. Recalling \eqref{new_variables} and \eqref{control_v}, we then immediately find an expression for the input function $u(t)$ that minimizes
the cost functional \eqref{cost_x} for the  system \eqref{sys_x} and reference signal $r_{\rm ref}(t) = r$.
\begin{thm}\label{thm_lqt}

	The input function $u(t)$ that minimizes the cost functional \eqref{cost_x} is generated by the control law
	\begin{equation}\label{control_u}
		u(t) = K_1 x(t) + K_2  r,
	\end{equation}
where $K_1 = - R^{-1}B^\top P_1$ and $K_2 = - R^{-1}B^\top P_{12}$.
\end{thm}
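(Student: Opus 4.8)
The plan is to exploit the fact that the change of variables \eqref{new_variables} converts the discounted tracking problem into the standard free-endpoint LQ problem for the auxiliary system $(A_e, B_e)$ with weights $(Q_e, R)$, whose optimal feedback \eqref{control_v} has already been recorded, and then simply to transform that feedback back into the original coordinates.

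First I would make precise the equivalence of the two minimization problems. For fixed $\alpha > 0$ the map $u(\cdot) \mapsto v(\cdot) = e^{-\alpha t} u(\cdot)$ is a bijection on the space of piecewise continuous inputs, with inverse $u(t) = e^{\alpha t} v(t)$; moreover, under \eqref{new_variables} the integrands of the two cost functionals coincide pointwise, since $e^{-2\alpha t}(x-r)^\top Q (x-r) = \xi^\top Q_e \xi$ and $e^{-2\alpha t} u^\top R u = v^\top R v$, so that $J(u) = J(v)$ for every admissible pair. Consequently $v^\ast$ minimizes $J(v)$ if and only if $u^\ast(t) = e^{\alpha t} v^\ast(t)$ minimizes \eqref{cost_x}, and it suffices to read off the optimal $v$ and transform it back.

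Next, since $(A_e, B_e)$ is stabilizable, \eqref{are} admits a smallest positive semi-definite solution $P_e^-$ and the optimal input for the auxiliary problem is the feedback \eqref{control_v}. Using $B_e^\top = \begin{pmatrix} B^\top & 0 \end{pmatrix}$ together with the block partition of $P_e^-$, a one-line computation gives $-R^{-1} B_e^\top P_e^- = \begin{pmatrix} -R^{-1}B^\top P_1 & -R^{-1}B^\top P_{12} \end{pmatrix} = \begin{pmatrix} K_1 & K_2 \end{pmatrix}$, so that $v(t) = K_1 z(t) + K_2 z_r(t)$. Substituting $z(t) = e^{-\alpha t} x(t)$ and $z_r(t) = e^{-\alpha t} r$ yields $v(t) = e^{-\alpha t}\left( K_1 x(t) + K_2 r \right)$, and multiplying by $e^{\alpha t}$ recovers \eqref{control_u}.

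The computation itself is entirely routine; the only point that genuinely needs care is the first step, namely justifying that the time-dependent, exponentially weighted change of variables carries minimizers to minimizers. This rests on the two facts that the transformation is a bijection on admissible inputs and that it leaves the cost invariant, $J(u) = J(v)$; once these are in place the rest is pure substitution. For completeness one should also confirm the claim that stabilizability of $(A,B)$ propagates to $(A_e, B_e)$: the lower block of $A_e$ is the Hurwitz matrix $-\alpha I_n$ and is decoupled from $v$, while shifting $A$ to $A - \alpha I_n$ with $\alpha > 0$ leaves the controllability structure unchanged and moves every uncontrollable eigenvalue of $A$ further into the open left half-plane. This guarantees existence of $P_e^-$ and hence of the optimal feedback, closing the argument.
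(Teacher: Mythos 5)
Your proposal is correct and follows essentially the same route as the paper: the paper's proof is precisely the observation that the change of variables \eqref{new_variables} turns the discounted tracking problem into the free-endpoint standard LQ problem for $(A_e,B_e)$ with weights $(Q_e,R)$, whose optimal feedback \eqref{control_v} is then partitioned blockwise and transformed back. Your additional care in verifying that the exponential weighting is a cost-preserving bijection on admissible inputs and that stabilizability of $(A,B)$ carries over to $(A_e,B_e)$ only makes explicit what the paper leaves as ``the above considerations.''
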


The proof follows immediately from the above considerations. See also \cite{6787009}.
\begin{rem}
Let $e(t) := x(t) -r$ denote the tracking error. Because $Q >0$, the control law \eqref{control_u} only guarantees that $\bar{e}(t) := e^{-\alpha t}e(t)$ tends to zero as $t$ goes to infinity. Thus, the feedback law that minimizes the LQ tracking cost functional \eqref{cost_x} only guarantees
the actual tracking error $e(t)$ to be exponentionally bounded with growth rate $\alpha>0$. Note that $\alpha >0$ can be taken arbitrarily small.

It will be shown however that, for the multi-agent system case, the control design method established in this section will, nevertheless, lead to a protocol that achieves consensus.
\end{rem}

%

\section{Consensus Analysis}\label{sec_main_results}
In this section, we will show that, by adopting the  control design method for the multi-agent system \eqref{all_agents} as proposed in Section \ref{decentralized_gain_compu}, the resulting distributed control protocol achieves consensus for the entire network.

As already explained in Section \ref{decentralized_gain_compu}, we choose a sampling period $T>0$ and introduce a sampling procedure.
For each nonnegative integer $k$, at time $t= kT$ the $i$-th agent receives the sampled state value of its neighboring agents (including itself) and minimizes the cost functional \eqref{discounted_cost_r}, which  is a discounted linear quadratic tracking problem with constant reference signal $r_{\rm ref}(t) = a_i(kT)$ and discount factor $\alpha >0$.

According to the theory on the discounted LQ tracking problem described in Section \ref{sec_lqt_problem},  the local optimal control law for agent $i$ at time $t =kT$ over the whole time horizon $[0, \infty)$ is therefore of the form
\begin{equation}\label{local_control}
	u_{i,k}(t) = g_{i,k} x_i(t) + g'_{i,k} a_i(kT),
\end{equation}
in which the control gains $g_{i,k}$ and $g'_{i,k}$ can be computed explicitly by solving the Riccati equation \eqref{are} associated with the LQ tracking problem for agent $i$.
\begin{lem}\label{lem_kT_optimal}
	Consider,  at time $t =kT$, the $i$-th  agent of the multi-agent system \eqref{one_agent} with associated local cost functional \eqref{discounted_cost_r}.
	%
	%
	Denote 
	\begin{equation*}
	\bar{A} =\begin{pmatrix}
	-\alpha & 0 \\
	0 & -\alpha
	\end{pmatrix},\quad
	\bar{B} =
	\begin{pmatrix}
	1 \\0
	\end{pmatrix},\quad
	\bar{Q} =
	\begin{pmatrix}
	q & -q \\
	-q & q	
	\end{pmatrix}.
	\end{equation*}
	Let $\bar{P}:=
	\begin{pmatrix}
	p_1 & p_{12} \\
	p_{12} & p_2
	\end{pmatrix}
	$
	be the smallest positive semi-definite solution of the Riccati equation
	\begin{equation}\label{are_pi}
	\bar{A}^\top \bar{P}  + \bar{P}  \bar{A} - r^{-1} \bar{P} \bar{B} \bar{B}^\top \bar{P}  + \bar{Q} =0.
	\end{equation}
	Then the local control law \eqref{local_control} with $g_{i,k} := -r^{-1} p_1$ and $g'_{i,k} := -r^{-1} p_{12}$ minimizes the cost \eqref{discounted_cost_r} for agent $i$.
\end{lem}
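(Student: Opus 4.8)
The plan is to recognize this lemma as a direct specialization of the discounted LQ tracking theory developed in Section \ref{sec_lqt_problem}, rather than to derive anything from scratch. The observation I would start from is that minimizing the cost functional \eqref{discounted_cost_r} for agent $i$ is exactly an instance of the discounted tracking problem \eqref{sys_x}--\eqref{cost_x}, applied to the scalar single-integrator dynamics \eqref{one_agent}. Concretely, in the notation of Section \ref{sec_lqt_problem} I would set $n = m = 1$, $A = 0$, $B = 1$, $Q = q$, $R = r$, discount factor $\alpha > 0$, and take the constant reference signal to be $r_{\rm ref}(t) = a_i(kT)$.

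With these identifications in hand, the next step is purely a matter of substitution. Plugging $A = 0$, $B = 1$ and $Q = q$ into the definitions of $A_e$, $B_e$ and $Q_e$ given in Section \ref{sec_lqt_problem} produces precisely the matrices $\bar{A}$, $\bar{B}$ and $\bar{Q}$ displayed in the statement, and the Riccati equation \eqref{are} collapses to \eqref{are_pi}. Before invoking Theorem \ref{thm_lqt} I would check its standing hypothesis: the pair $(A,B) = (0,1)$ is controllable, hence stabilizable, so the theorem applies and \eqref{are_pi} admits a smallest positive semi-definite solution $\bar{P}$, whose blocks $p_1$, $p_{12}$, $p_2$ are exactly the blocks $P_1$, $P_{12}$, $P_2$ of $P_e^-$ in the general result.

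It then only remains to read off the gains from Theorem \ref{thm_lqt}. The theorem yields an optimal control of the form \eqref{control_u}, namely $u = K_1 x + K_2 r_{\rm ref}$ with $K_1 = -R^{-1}B^\top P_1$ and $K_2 = -R^{-1}B^\top P_{12}$. Substituting $R = r$, $B = 1$, $P_1 = p_1$, $P_{12} = p_{12}$ and $r_{\rm ref} = a_i(kT)$ gives $u_{i,k}(t) = -r^{-1}p_1 x_i(t) - r^{-1}p_{12}a_i(kT)$, which is exactly the claimed form \eqref{local_control} with $g_{i,k} = -r^{-1}p_1$ and $g'_{i,k} = -r^{-1}p_{12}$.

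Since essentially all the work has been done in Section \ref{sec_lqt_problem}, I do not expect a genuine obstacle here. The one point demanding care is purely notational: the symbol $r$ plays two different roles, denoting the scalar control weight in this lemma but the generic constant reference vector in Theorem \ref{thm_lqt}. I would therefore be explicit that the reference entering the theorem is $a_i(kT)$ and that the weight $R$ of the theorem is instantiated by the scalar $r$, so that the two appearances of $r$ are never conflated.
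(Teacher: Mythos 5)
Your proposal is correct and takes exactly the same route as the paper: the paper's own proof of Lemma~\ref{lem_kT_optimal} is the single line ``This follows immediately from Theorem~\ref{thm_lqt}.'' Your write-up simply makes explicit the specialization ($n=m=1$, $A=0$, $B=1$, $Q=q$, $R=r$, reference $a_i(kT)$) that this one-liner relies on, including the stabilizability check and the caution about the two roles of the symbol $r$, which is a faithful (and more detailed) rendering of the intended argument.
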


\begin{proof}
	This follows immediately from Theorem \ref{thm_lqt}.
\end{proof}

Next, agent $i$ applies the control law \eqref{local_control} only on the time interval  $[kT, (k+1)T)$. 
Then, at time $t = (k+1)T$ the above procedure is repeated.

Since, for all $i = 1,2,\ldots, N$ and $k = 0,1,\ldots$, the matrices $\bar{A}$,  $\bar{B}$ and $\bar{Q}$ are independent of $i$ and $k$, the same holds for the gains $g_{i,k}$ and $g'_{i,k}$.
In the sequel, we will therefore drop the subscripts in the control gains $g_{i,k}$ and  $g'_{i,k}$  and denote them by $ g$ and $g'$, respectively.
Moreover, using \eqref{are_pi}, we compute $g = r^{-1}(\alpha - \sqrt{\alpha^2 + rq})$ and $g' = - g$.

By performing this procedure sequentially at each sampling time $kT$, we then obtain a single control law for agent $i$ over the entire interval $[0, \infty)$ as
\begin{equation}\label{local_control_kT}
	u_{i,k}(t) = g x_i(t) -g  a_i(kT), \quad t \in [kT, (k+1)T),
\end{equation}
where $g = r^{-1}(\alpha - \sqrt{\alpha^2 + rq}) <0$.

Recall that $a(t) = Gx(t)$, with $G$ given by \eqref{matrix_G}, and that $a(kT) = G x(kT)$.  
Therefore, the local control laws for the individual agents lead to a distributed  control protocol
\begin{equation}\label{protocol}
u_{k}(t)  = g x(t) - g G x(kT),  \quad t \in [kT, (k+1)T).
\end{equation}
Now, by applying the  protocol \eqref{protocol} to the multi-agent system \eqref{one_agent},  we  find that the controlled network is represented by
\begin{equation}\label{network_compact}
\dot{x}(t) = g x(t) - g G x(kT),  \quad t \in [kT, (k+1)T).
\end{equation}
In the remainder of this section, we will analyze this representation, and show that consensus is achieved, i.e. for each initial state $x(0) = x_0$ we have $x_i(t) - x_j(t) \to 0$ as $t$ tends to infinity.

In order to do this, note that the solution of \eqref{network_compact} with initial state $x(0) = x_0$ is given by
\begin{equation}  \label{expl}
x(t) = e^{g(t-kT)} x(kT) - \int_{kT}^{t} e^{g(t-\tau)} g  G \ x(kT) \ d\tau,
\end{equation}
for $t \in [kT, (k+1)T)$, $k =0,1,2, \ldots$. Obviously, for each initial state $x_0$, the corresponding solution $x(t)$ is continuous.
From \eqref{expl} we see that the sequence of network states $x(kT)$ evaluated at the discrete time instances $kT$, $k=0,1, \ldots$ satisfies the difference equation
\begin{equation}\label{Gamma_1}
	x((k+1)T) =\Gamma x(kT),
\end{equation}
 $\Gamma = e^{g T} I_N - ( e^{g T} - 1) G \in \mathbb{R}^{N \times N}$. 

Clearly, the network reaches consensus if and only if for each $x_0$, $x_i(kT) - x_j(kT) \to 0$ as $t$ tends to infinity.

We proceed with analyzing the eigenvalues of $G$.
\begin{lem}\label{lem_G}
	The  matrix $G$  has an eigenvalue $1$ with algebraic multiplicity equal to one and associated eigenvector $\mathbf{1}_N$.
	The remaining eigenvalues of $G$ are all real and have absolute value strictly less than $1$.
\end{lem}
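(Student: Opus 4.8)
The plan is to exploit the identity $G = I_N - (D+I_N)^{-1} L$, which follows at once from $A + I_N = (D+I_N) - L$. This reduces everything to the spectrum of $M := (D+I_N)^{-1} L$: an eigenvalue $\lambda$ of $M$ corresponds to the eigenvalue $1 - \lambda$ of $G$, with the same eigenvectors. So I must show that $M$ has real eigenvalues, that $\lambda = 0$ is a simple eigenvalue with eigenvector $\mathbf{1}_N$, and that every other eigenvalue lies in the open interval $(0,2)$.

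First I would argue that $M$ has real, nonnegative eigenvalues. Although $M$ is not symmetric, the similarity transformation $S = (D+I_N)^{1/2}$ gives $S M S^{-1} = \tilde{M}$, where $\tilde{M} := (D+I_N)^{-1/2} L (D+I_N)^{-1/2}$ is symmetric. Since $L \geq 0$ for a connected undirected graph, $\tilde{M} \geq 0$, so its eigenvalues are real and nonnegative; moreover $G$ is similar to $I_N - \tilde{M}$ and is therefore diagonalizable with real spectrum.

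For the eigenvalue $1$ of $G$, note that $\ker M = \ker L = {\rm im}(\mathbf{1}_N)$, using connectedness of the graph and invertibility of $D + I_N$. Hence $\lambda = 0$ is an eigenvalue of $M$ of geometric multiplicity one with eigenvector $\mathbf{1}_N$; because $G$ is diagonalizable, algebraic and geometric multiplicities coincide, so $G$ has the eigenvalue $1$ with algebraic multiplicity one and eigenvector $\mathbf{1}_N$. (One also checks $G\mathbf{1}_N = \mathbf{1}_N$ directly, since $(A+I_N)\mathbf{1}_N = (D+I_N)\mathbf{1}_N$.)

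The crux, and the step I expect to be the main obstacle, is the upper bound: I must show that every nonzero eigenvalue $\lambda$ of $M$ satisfies $\lambda < 2$, so that $1 - \lambda \in (-1,1)$. Positivity $\lambda > 0$ is immediate from $\tilde{M} \geq 0$. For the bound $\lambda_{\max}(\tilde{M}) < 2$, equivalently $2 I_N - \tilde{M} > 0$, I would use the congruence $2 I_N - \tilde{M} = (D+I_N)^{-1/2}\bigl(2(D+I_N) - L\bigr)(D+I_N)^{-1/2}$, which reduces the claim to $2(D+I_N) - L > 0$. Rewriting $2(D+I_N) - L = D + A + 2 I_N$ and recognizing $D + A$ as the signless Laplacian, which is positive semidefinite because $x^\top (D+A) x = \sum_{(i,j)\in\mathcal{E}} (x_i + x_j)^2 \geq 0$, yields $D + A + 2 I_N \geq 2 I_N > 0$. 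Hence $\lambda_{\max}(\tilde{M}) < 2$ strictly, and the remaining eigenvalues of $G$ all lie strictly inside $(-1,1)$. The difficulty here is purely in spotting the signless Laplacian and its semidefiniteness; the rest is bookkeeping.
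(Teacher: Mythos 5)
Your proof is correct, and its first two steps coincide with the paper's: the similarity $(D+I_N)^{1/2} G (D+I_N)^{-1/2} = I_N - (D+I_N)^{-1/2} L (D+I_N)^{-1/2}$ to get real eigenvalues and diagonalizability, and the kernel argument $\ker(L) = {\rm im}(\mathbf{1}_N)$ (connectedness) to get simplicity of the eigenvalue $1$ with eigenvector $\mathbf{1}_N$. Where you genuinely depart from the paper is the final bound. The paper disposes of it in one line with Gershgorin's theorem applied to $G = (D+I_N)^{-1}(A+I_N)$: row $i$ has diagonal entry $\tfrac{1}{d_i+1}$ and off-diagonal absolute row sum $\tfrac{d_i}{d_i+1}$, so every Gershgorin disk meets the real axis inside $(-1,1]$, and since the eigenvalues are already known to be real, $-1 < \lambda \leq 1$ follows. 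You instead prove the equivalent statement $2 I_N - (D+I_N)^{-1/2} L (D+I_N)^{-1/2} > 0$ by congruence, reducing it to $2(D+I_N) - L = D + A + 2 I_N \geq 2 I_N > 0$ via positive semidefiniteness of the signless Laplacian $D+A$. Both are sound. The Gershgorin route is shorter and needs nothing beyond entrywise nonnegativity of $G$; your route is more structural: it makes visible exactly why the bound is strict (the identity shift --- the ``lazy'' part of $G$ --- is what excludes the eigenvalue $-1$, which the signless Laplacian alone would permit, since $D+A$ is singular precisely on bipartite graphs), and the quadratic-form argument carries over to settings where entrywise reasoning is less natural, such as matrix-weighted graphs. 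One bookkeeping remark: with the paper's convention that $\mathcal{E}$ is a set of \emph{ordered} pairs, $\sum_{(i,j)\in\mathcal{E}}(x_i+x_j)^2 = 2\,x^\top (D+A)\, x$ rather than $x^\top (D+A)\, x$; the factor of two is immaterial for the semidefiniteness you need.
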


\begin{proof}
	Since ${L} = {D} - {A}$,  we have $G = I_N - ({D} + I_N)^{-1}  {L}.$
	%
	Hence we have $\tilde{D}^{\frac{1}{2}}G \tilde{D}^{-\frac{1}{2}} =  I_N - \tilde{D}^{-\frac{1}{2}} {L}\tilde{D}^{-\frac{1}{2}}$ where $\tilde{D}=D+I_N$. 
	Note that the right hand side is symmetric and hence has only real eigenvalues. 	%
	Thus, by matrix similarity, $G$ also has only real eigenvalues.

	Next, we show that $G$ has  a simple eigenvalue $1$ with associated eigenvector  $\mathbf{1}_N$.
	First note  that
	\begin{equation}
	G \mathbf{1}_N = (I_N - ({D} + I_N)^{-1}  {L}) \mathbf{1}_N = \mathbf{1}_N.
	\end{equation}
	Hence, indeed, $1$ is an eigenvalue of $G$ with eigenvector $\mathbf{1}_N$. Since $G$ is similar to a symmetric matrix, it is diagonalizable, so the algebraic multiplicity of its eigenvalue $1$ must be equal to its geometric multiplicity. Suppose now that 1 is not a simple eigenvalue. Then there must exist a second eigenvector, say $v$, which is linearly independent of $\mathbf{1}_N$. This implies $Gv = v$. Then $Lv = 0$, so $v$ must be a multiple of $\mathbf{1}_N$. This is a contradiction. We conclude that the eigenvalue $1$ is indeed simple.

Finally, it follows from Gershgorin's Theorem  \cite{Horn1990} that every eigenvalue $\lambda$ of $G$  satisfies $-1 < \lambda \leq  1$.
\end{proof}

Before we give the main result of this paper, we first review the following proposition.

\begin{prop}\label{prop_output_stable}
	Consider the discrete-time system
\begin{equation*}
		x(k+1) = A x(k),\quad x(0)=x(0),\quad y(k) = Cx(k)
\end{equation*}
	with $A \in \mathbb{R}^{n \times n}$ and $C \in \mathbb{R}^{p \times n}$, where $x(k) \in \mathbb{R}^n$ is the state, $x_0$ is the initial state and $y(k) \in \mathbb{R}^p$ is the output.
	Then, $y(k) \to 0$ as $k \to \infty$ for all  initial states $x_0$ if and only if 
$X_+(A) \subset \ker (C).$ Here, $X_+(A)$ is the unstable subspace, i.e., the sum of the generalized eigenspaces of $A$ associated with its eigenvalues in $\{\lambda \in \mathbb{C} \mid |\lambda| \geq 1 \}$.
\end{prop}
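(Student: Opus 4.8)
The plan is to use the explicit solution $x(k) = A^k x_0$, so that $y(k) = C A^k x_0$, together with the $A$-invariant direct sum decomposition $\mathbb{R}^n = X_-(A) \oplus X_+(A)$, where $X_-(A)$ denotes the stable subspace (the sum of generalized eigenspaces for eigenvalues with $|\lambda| < 1$) and $X_+(A)$ is the unstable subspace of the statement. The two summands are $A$-invariant, the restriction $A|_{X_-(A)}$ has spectral radius strictly less than $1$, and every eigenvalue of $A|_{X_+(A)}$ has modulus at least $1$. These facts are standard and I would invoke them directly.

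For the ``if'' direction, I assume $X_+(A) \subseteq \ker(C)$ and write $x_0 = x_- + x_+$ with $x_\pm \in X_\pm(A)$. On the stable part, $A^k x_- \to 0$ because $A|_{X_-(A)}$ has spectral radius below $1$, hence $C A^k x_- \to 0$. On the unstable part, invariance gives $A^k x_+ \in X_+(A) \subseteq \ker(C)$, so $C A^k x_+ = 0$ for every $k$. Adding the two contributions yields $y(k) = C A^k x_0 \to 0$.

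For the ``only if'' direction, I assume $y(k) \to 0$ for every $x_0$ and restrict to $x_0 \in X_+(A)$, setting $M := A|_{X_+(A)}$ and $C_+ := C|_{X_+(A)}$, so that $C_+ M^k x_0 \to 0$ for all $x_0 \in X_+(A)$; the goal is $C_+ = 0$. First I pass to the unobservable subspace $\mathcal{O} := \bigcap_{k \geq 0} \ker(C_+ M^k)$, which is $M$-invariant, and consider the induced pair on the quotient $X_+(A)/\mathcal{O}$: there the induced output map $\bar C$ and matrix $\bar M$ form an observable pair, and the eigenvalues of $\bar M$ lie among those of $M$, hence still all have modulus at least $1$. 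Observability makes the finite Gramian $W := \sum_{k=0}^{d-1} (\bar M^\top)^k \bar C^\top \bar C \bar M^k$ positive definite, where $d = \dim\, X_+(A)/\mathcal{O}$. Since $\bar C \bar M^k z \to 0$ for every $z$ (it equals $C_+ M^k x$ for any lift $x$ of $z$), I evaluate $W$ along the trajectory: $(\bar M^m z)^\top W (\bar M^m z) = \sum_{k=0}^{d-1} |\bar C \bar M^{m+k} z|^2 \to 0$, and positive definiteness of $W$ forces $\bar M^m z \to 0$ for all $z$, i.e. $\bar M^m \to 0$, so the spectral radius of $\bar M$ is strictly less than $1$. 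This contradicts the fact that every eigenvalue of $\bar M$ has modulus at least $1$, unless the quotient is trivial, i.e. $X_+(A) = \mathcal{O}$. Since $\mathcal{O} \subseteq \ker(C_+ M^0) = \ker(C_+)$, this gives $X_+(A) \subseteq \ker(C)$, as required.

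I expect the ``only if'' direction to be the main obstacle: the delicate point is upgrading ``the output decays'' to ``the state decays,'' which is false without observability and is precisely why the quotient by the unobservable subspace is needed. Once observability is in force, the positive-definite Gramian converts output decay into state decay, and the resulting clash with the spectral radius of the unstable block closes the argument. The ``if'' direction, together with the invariance and spectral-radius properties of $X_\pm(A)$, is routine.
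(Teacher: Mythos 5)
Your proof is correct. Note, however, that the paper does not actually prove this proposition: its ``proof'' is a single sentence deferring to the continuous-time analogue in a textbook reference (\cite{harry_book}, p.~99), to be ``generalized to the discrete time case.'' So you have supplied the argument the paper leaves implicit. Your route is the natural one: the spectral decomposition $\mathbb{R}^n = X_-(A)\oplus X_+(A)$ disposes of the ``if'' direction, and for ``only if'' you quotient the unstable subspace by its unobservable subspace, use the observability Gramian to upgrade output decay to state decay on the quotient, and derive a spectral-radius contradiction. This is exactly the delicate point you identified --- output decay does not imply state decay without observability --- and your handling of it is sound. One structural remark worth making explicit: your argument actually shows $X_+(A)\subseteq\bigcap_{k\ge 0}\ker(CA^k)$, i.e.\ containment in the unobservable subspace, and this is equivalent to $X_+(A)\subseteq\ker(C)$ precisely because $X_+(A)$ is $A$-invariant (you use this implicitly via $\mathcal{O}\subseteq\ker(C_+M^0)$). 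Two cosmetic points: ``positive definite'' and the transpose on the quotient space $X_+(A)/\mathcal{O}$ require fixing a basis (or an inner product) there, which is harmless; and the Gramian truncation at $d-1$ terms tacitly invokes Cayley--Hamilton, which you should cite when writing this up. Neither affects correctness.
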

\begin{proof}
	A proof can be given by generalizing the results \cite[pp. 99]{harry_book} to the discrete time case.
\end{proof}

We are now ready to present the main result of this paper.

\begin{thm}\label{main_thm}
	Consider the multi-agent system \eqref{one_agent}. Let $T >0$ be a sampling period, $\alpha >0$ a discount factor, and let $q,r > 0$ be given weights. 
	Let $\bar{P}$ be the smallest positive semi-definite solution of the Riccati equation \eqref{are_pi} and partition 
	$
	\bar{P}:=
	\begin{pmatrix}
	p_1 & p_{12} \\
	p_{12} & p_2
	\end{pmatrix}.
	$
	Then the distributed control protocol  \eqref{protocol} with $g = -r^{-1} p_1$ and $g' = -r^{-1} p_{12}$ achieves consensus for the controlled network \eqref{network_compact}.
\end{thm}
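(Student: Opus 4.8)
The plan is to reduce the continuous-time consensus requirement to a discrete-time output-convergence statement and then invoke Proposition \ref{prop_output_stable}. First I would record the intersample behaviour: solving \eqref{network_compact} on $[kT,(k+1)T)$ exactly as in \eqref{expl} gives $x(kT+s)=\Gamma(s)\,x(kT)$ with $\Gamma(s):=e^{gs}I_N-(e^{gs}-1)G$ for $s\in[0,T)$, so that $\Gamma(T)=\Gamma$. Since $G\mathbf{1}_N=\mathbf{1}_N$ by Lemma \ref{lem_G}, each $\Gamma(s)$ fixes $\mathbf{1}_N$, and the family $\{\Gamma(s):s\in[0,T]\}$ is uniformly bounded by continuity on the compact interval. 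Consequently, if the sampled sequence $x(kT)$ governed by \eqref{Gamma_1} converges to $\mathrm{im}(\mathbf{1}_N)$, this uniform bound propagates the property to all $t$, yielding $x_i(t)-x_j(t)\to 0$ and hence consensus in the sense of Definition \ref{def_consensus}.

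For the sampled dynamics I would introduce an output $y(k)=Cx(kT)$, where $C$ is any matrix with $\ker(C)=\mathrm{im}(\mathbf{1}_N)$; then $y(k)\to 0$ is equivalent to $x(kT)\to\mathrm{im}(\mathbf{1}_N)$, i.e. to consensus of the sampled sequence. By Proposition \ref{prop_output_stable} this holds for every initial state precisely when $X_+(\Gamma)\subset\ker(C)=\mathrm{im}(\mathbf{1}_N)$, so the whole argument reduces to locating the eigenvalues of $\Gamma$.

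The key step is the eigenvalue analysis, and this is where the only genuine (albeit elementary) work lies. Because $\Gamma=e^{gT}I_N-(e^{gT}-1)G$ is affine in $G$, the two matrices share eigenvectors and each eigenvalue $\mu$ of $G$ produces the eigenvalue $\lambda=e^{gT}-(e^{gT}-1)\mu$ of $\Gamma$. Writing $\beta:=e^{gT}$ and using $g<0$ gives $\beta\in(0,1)$, so $\lambda=\beta+\mu(1-\beta)$ with $1-\beta>0$. For $\mu=1$ this yields $\lambda=1$ with eigenvector $\mathbf{1}_N$; for the remaining eigenvalues of $G$, which lie in $(-1,1)$ by Lemma \ref{lem_G}, monotonicity of $\mu\mapsto\beta+\mu(1-\beta)$ gives $2\beta-1<\lambda<1$, and $\beta>0$ forces $2\beta-1>-1$, so every such $\lambda$ satisfies $|\lambda|<1$. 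Since $G$ is diagonalizable with a simple eigenvalue $1$, so is $\Gamma$, and its only eigenvalue on or outside the unit circle is the simple $\lambda=1$ with eigenvector $\mathbf{1}_N$. I expect verifying \emph{both} strict inequalities $-1<\lambda<1$ to be the main obstacle, as it is the one place where the sign of $g$ and the spectral bounds of Lemma \ref{lem_G} must be combined carefully.

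Finally I would assemble the pieces. The eigenvalue computation shows $X_+(\Gamma)=\mathrm{im}(\mathbf{1}_N)\subset\ker(C)$, so Proposition \ref{prop_output_stable} gives $y(k)\to 0$. Using the spectral projection onto the simple eigenvalue $1$, I would write $x(kT)=c\,\mathbf{1}_N+w_k$ with $c\,\mathbf{1}_N$ the preserved $\mathbf{1}_N$-component of $x_0$ and $w_k\to 0$. Substituting into $x(kT+s)=\Gamma(s)\,x(kT)$ and using $\Gamma(s)\mathbf{1}_N=\mathbf{1}_N$ together with the uniform bound on $\Gamma(s)$ yields $x(t)\to c\,\mathbf{1}_N$, which is exactly consensus and completes the proof.
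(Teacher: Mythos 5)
Your proposal is correct and follows essentially the same route as the paper's proof: reduce consensus to the sampled dynamics \eqref{Gamma_1}, invoke Proposition \ref{prop_output_stable} with a kernel equal to ${\rm im}(\mathbf{1}_N)$, and locate the spectrum of $\Gamma$ by pushing the eigenvalues of $G$ (Lemma \ref{lem_G}) through the affine map $\mu \mapsto e^{gT}+\mu(1-e^{gT})$ using $g<0$. The only difference is that your intersample argument via $\Gamma(s)=e^{gs}I_N-(e^{gs}-1)G$ makes rigorous the reduction from continuous-time consensus to the sampled sequence, a step the paper simply asserts as clear; this is a welcome refinement rather than a different approach.
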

\begin{proof}
	The network reaches consensus if and only if $Lx(kT) \to 0$ as $k \to \infty$.
	Since $\ker(L) = {\rm im} (\mathbf{1}_N)$, it then follows from Proposition \ref{prop_output_stable} that consensus is achieved if and only if $X_+(\Gamma) \subset \ker(L)$, equivalently, the sum of the generalized eigenspaces of $\Gamma$ corresponding to the eigenvalues $\lambda$ with $\mid\lambda\mid \geq 1$ is equal to  ${\rm im} (\mathbf{1}_N)$.
	
	Indeeed, we will show that all eigenvalues $\lambda$ of $\Gamma$ are real and satisfy $-1 < \lambda \leq 1$, and $\lambda =1$ is a simple eigenvalue with associated eigenvector $\mathbf{1}_N$.

	Recall that $\Gamma = e^{g T} I_N - ( e^{g T} - 1) G$. Hence, $\mu$ is an eigenvalue of $\Gamma$ if and only if $\mu = e^{g T}  - \lambda ( e^{g T} - 1)$ where $\lambda$ is an eigenvalue of $G$. 
It was shown in Lemma \ref{lem_G} that all eigenvalues $\lambda$ of $G$ are real and satisfy $-1 < \lambda \leq 1$ and, moreover, $\lambda =1$ is a simple eigenvalue. Using the fact that $g < 0$ we thus obtain that the eigenvalues $\mu$ of $\Gamma$ satisfy $-1 < \mu \leq 1$ and $\mu =1 $ is a simple eigenvalue of $\Gamma$.

Finally, we will show $\mu =1$ has eigenvector $\mathbf{1}_N$. Indeed, this follows from
$
\Gamma \mathbf{1}_N= (e^{g T} I_N - ( e^{g T} - 1) G) \mathbf{1}_N = \mathbf{1}_N.$
 	This completes the proof.
\end{proof}
\begin{rem}
By analyzing the eigenvalues $\mu$ of $\Gamma$ satisying $-1 < \mu <1$, it can be seen that, for given $\alpha$, the convergence rate of the difference equation \eqref{Gamma_1} increases with increasing sampling period $T$. The total time it takes to reach a disagreement smaller than a given tolerance is then the product of the number of iterations in \eqref{Gamma_1} and this sampling period. It might therefore be more advantageous to use a smaller sampling period with a larger number of required iterations, but yet leading to a smaller total time.
In other words, the choice of sampling period is a trade-off between the total time required to obtain an acceptable disagreement, and the number of iterations in \eqref{Gamma_1}. 
\end{rem}

\section{Simulation}\label{sec_simulation}
%
Consider a network of six agents with single integrator dynamics
$\dot{x}_i(t) = u_i(t),i =1,2,\ldots,6,$
where the initial states are $x_{10} = 1$, $x_{20} = 2$, $x_{30} = -1$, $x_{40} = -2$, $x_{50} = 1$ and $x_{60} = 3$.
We assume that the communication among these agents is represented by an undirected circle graph with six nodes.
First, we take the sampling period to be equal $T=10$. 
On the time interval $t \in [kT, (k+1)T)$, $k =0,1, \ldots$, we consider the local cost functional \eqref{discounted_cost_r} for agent $i$.
We choose the weights to be $q =2$, $r=1$ and the discount factor $\alpha = 0.01$.
We adopt the control design proposed in Theorem \ref{main_thm} and compute the smallest positive semi-definite of the Riccati equation
$A^\top {P}  +{P}  {A} -  r^{-1}  {P} {B} {B}^\top {P}  + {Q} =0$
with
\begin{equation*} 
	A = 
	\begin{pmatrix}
		-0.01 & 0 \\
		0 & -0.01
	\end{pmatrix},\
	B = 
	\begin{pmatrix}
		1 \\
		0
	\end{pmatrix},\
	Q = 
	\begin{pmatrix}
		2 & -2 \\
		-2 & 2
	\end{pmatrix}.
\end{equation*}
\noindent This Riccati equation has a unique positive semi-definite solution which is given by
\begin{equation*} 
	P = 
	\begin{pmatrix}
    1.4042  & -1.4042 \\
-1.4042  &  1.4042		
	\end{pmatrix}. 
\end{equation*}
Thus we find the control gains 
$	g =    -1.4042$ and
$	g' =     1.4042.$
Subsequently, the local control law for agent $i$ is given by
$
		u_{i,k}(t) =  -1.4042 x_i(t) +  1.4042 a_i(kT)
$
for $t \in [kT, (k+1)T)$ and $i = 1,2,3$ and $k = 0,1,\ldots$.

In Figure \ref{consensus1} we have plotted the controlled trajectories of the individual agents. It can be seen that  the protocol resulting from the local control laws indeed achieves consensus.
\begin{figure}[t!]
	\centering
	\includegraphics[height=5cm]{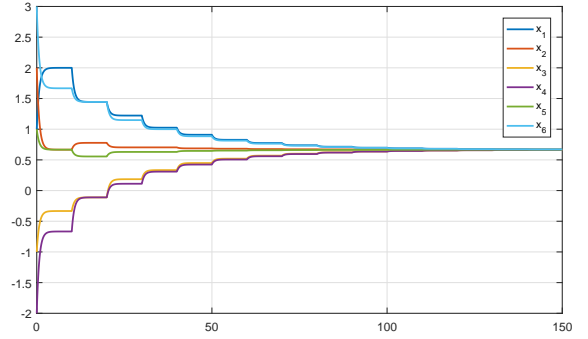}
	\caption{Plot of the states of the controlled network with $T =10$} \label{consensus1}
\end{figure}
%
The results of a second simulation, this time with sampling period $T =0.1$, are plotted in Figure \ref{consensus2}.
\begin{figure}[t!]
	\centering
	\includegraphics[height=5cm]{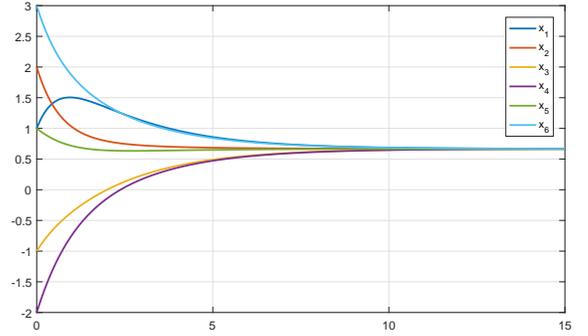}
	\caption{Plot of the states of the controlled network with $T =0.1$} \label{consensus2}
\end{figure}

By comparing  Figure~\ref{consensus1} and \ref{consensus2}, it can be seen that the network reaches consensus faster by taking a smaller sampling period.

\balance

\section{Conclusion}\label{sec_conclusion}
We have studied the distributed linear quadratic control problem for a network of agents with single integrator dynamics.
We have shown that the computation of control gains that minimize global cost functionals need global information, in particular the initial states of all agents and the Laplacian matrix. 
We have also shown that this drawback can be overcome  by transforming the global cost functional into discounted local cost functionals and assigning each of these to an associated agent.
In such a way, each agent computes its own control gain, using sampled information of its neighboring agents.
Finally, we have shown that the resulting control protocol achieves consensus for the network.




%
%
%




{\footnotesize
\bibliographystyle{ieee}
\bibliography{local_costs}  }

\begin{thebibliography}{10}

\bibitem{Jiao2018}
J.~Jiao, H.~L. Trentelman and M.~K. Camlibel,
\newblock ``A suboptimality approach to distributed linear quadratic optimal
  control'',
\newblock {\em submitted for publication}, 2018.
\vspace{-1mm}

\bibitem{wei_ren2010}
Y.~Cao and W.~Ren,
\newblock ``Optimal linear-consensus algorithms: an {LQR} perspective'',
\newblock {\em IEEE Transactions on Systems, Man, and Cybernetics, Part B
  (Cybernetics)}, vol.~40, n.~3, pp.~819--830, June 2010.
\vspace{-1mm}

\bibitem{tamas2008}
F.~Borrelli and T.~Keviczky,
\newblock ``Distributed {LQR} design for identical dynamically decoupled
  systems'',
\newblock {\em IEEE Transactions on Automatic Control}, vol.~53, n.~8,
  pp.~1901--1912, Sept 2008.
\vspace{-1mm}

\bibitem{lunze_ecc_2013}
A.~Mosebach and J.~Lunze,
\newblock ``Optimal synchronization of circulant networked multi-agent
  systems'',
\newblock in {\em 2013 European Control Conference (ECC)}, pp. 3815--3820, July
  2013.
\vspace{-1mm}

\bibitem{Mosebach2014}
A.~Mosebach and J.~Lunze,
\newblock ``Synchronization of autonomous agents by an optimal networked
  controller'',
\newblock in {\em Proc. European Control Conf. (ECC)}, pp. 208--213, June 2014.
\vspace{-1mm}

\bibitem{GuaranteedLQR}
P.~{Deshpande}, P.~P. {Menon}, C.~{Edwards} and I.~{Postlethwaite},
\newblock ``A distributed control law with guaranteed LQR cost for identical
  dynamically coupled linear systems'',
\newblock in {\em Proceedings of the 2011 American Control Conference}, pp.
  5342--5347, June 2011.
\vspace{-1mm}

\bibitem{kristian2014}
K.~H. Movric and F.~L. Lewis,
\newblock ``Cooperative optimal control for multi-agent systems on directed
  graph topologies'',
\newblock {\em IEEE Transactions on Automatic Control}, vol.~59, n.~3,
  pp.~769--774, March 2014.
\vspace{-1mm}

\bibitem{Inverse2015}
H.~{Zhang}, T.~{Feng}, G.~{Yang} and H.~{Liang},
\newblock ``Distributed cooperative optimal control for multiagent systems on
  directed graphs: an inverse optimal approach'',
\newblock {\em IEEE Transactions on Cybernetics}, vol.~45, n.~7,
  pp.~1315--1326, July 2015.
\vspace{-1mm}

\bibitem{SEMSARKAZEROONI20092205}
E.~Semsar-Kazerooni and K.~Khorasani,
\newblock ``Multi-agent team cooperation: A game theory approach'',
\newblock {\em Automatica}, vol.~45, n.~10, pp.~2205 -- 2213, 2009.
\vspace{-1mm}

\bibitem{Nguyen2015}
Dinh~Hoa Nguyen,
\newblock ``A sub-optimal consensus design for multi-agent systems based on
  hierarchical {LQR}'',
\newblock {\em Automatica}, vol.~55, pp.~88 -- 94, 2015.
\vspace{-1mm}

\bibitem{nguyen_2017}
D.~H. Nguyen,
\newblock ``Reduced-order distributed consensus controller design via edge
  dynamics'',
\newblock {\em IEEE Transactions on Automatic Control}, vol.~62, n.~1,
  pp.~475--480, Jan 2017.
\vspace{-1mm}

\bibitem{Decoupling2017}
V.~{Rezaei} and M.~{Stefanovic},
\newblock ``Distributed decoupling of linear multiagent systems with mixed
  matched and unmatched state-coupled nonlinear uncertainties'',
\newblock in {\em 2017 American Control Conference (ACC)}, pp. 2693--2698, May
  2017.
\vspace{-1mm}

\bibitem{Ding2015}
Z.~{Li} and Z.~{Ding},
\newblock ``Fully distributed adaptive consensus control of multi-agent systems
  with LQR performance index'',
\newblock in {\em 2015 54th IEEE Conference on Decision and Control (CDC)}, pp.
  386--391, 2015.
\vspace{-1mm}

\bibitem{VAMVOUDAKIS20121598}
Kyriakos~G. Vamvoudakis, Frank~L. Lewis and Greg~R. Hudas,
\newblock ``Multi-agent differential graphical games: Online adaptive learning
  solution for synchronization with optimality'',
\newblock {\em Automatica}, vol.~48, n.~8, pp.~1598 -- 1611, 2012.
\vspace{-1mm}

\bibitem{MODARES2016334}
Hamidreza Modares, Subramanya~P. Nageshrao, Gabriel A.~Delgado Lopes, Robert
  Babuška and Frank~L. Lewis,
\newblock ``Optimal model-free output synchronization of heterogeneous systems
  using off-policy reinforcement learning'',
\newblock {\em Automatica}, vol.~71, pp.~334 -- 341, 2016.
\vspace{-1mm}

\bibitem{frank_structure_2015}
J.~M. Montenbruck, G.~S. Schmidt, G.~S. Seyboth and F.~Allgöwer,
\newblock ``On the necessity of diffusive couplings in linear synchronization
  problems with quadratic cost'',
\newblock {\em IEEE Transactions on Automatic Control}, vol.~60, n.~11,
  pp.~3029--3034, Nov 2015.
\vspace{-1mm}

\bibitem{7862732}
H.~J. van Waarde, M.~K. Camlibel and H.~L. Trentelman,
\newblock ``Comments on {``}{O}n the necessity of diffusive couplings in linear
  synchronization problems with quadratic cost{''}'',
\newblock {\em IEEE Transactions on Automatic Control}, vol.~62, n.~6,
  pp.~3099--3101, June 2017.
\vspace{-1mm}

\bibitem{Olfati-Saber2004}
R.~Olfati-Saber and R.~M. Murray,
\newblock ``Consensus problems in networks of agents with switching topology
  and time-delays'',
\newblock {\em IEEE Transactions on Automatic Control}, vol.~49, n.~9,
  pp.~1520--1533, 2004.
\vspace{-1mm}

\bibitem{hiddejan}
H.-J. Jongsma, P.~Mlinari\'{c}, S.~Grundel, P.~Benner and H.~L. Trentelman,
\newblock ``Model reduction of linear multi-agent systems by clustering with
  {$\mathcal{H}_2$} and {$\mathcal{H}_{\infty}$} error bounds'',
\newblock {\em Mathematics of Control, Signals, and Systems}, vol.~30, pp.~6,
  2018.
\vspace{-1mm}

\bibitem{6787009}
H.~Modares and F.~L. Lewis,
\newblock ``Linear quadratic tracking control of partially-unknown
  continuous-time systems using reinforcement learning'',
\newblock {\em IEEE Transactions on Automatic Control}, vol.~59, n.~11,
  pp.~3051--3056, Nov 2014.
\vspace{-1mm}

\bibitem{harry_book}
H.~L. Trentelman, A.~A Stoorvogel and M.~Hautus,
\newblock {\em Control Theory for Linear Systems},
\newblock Springer Verlag, 2001.
\vspace{-1mm}

\bibitem{Horn1990}
R.A. Horn and C.R. Johnson,
\newblock {\em Matrix Analysis},
\newblock Cambridge University Press, 1990.
\vspace{-1mm}

\end{thebibliography}

\end{document}